\numberwithin{equation}{section}
\theoremstyle{plain}
\newtheorem{theorem}{Theorem}[section]
\newtheorem{lemma}[theorem]{Lemma}
\theoremstyle{plain}
\numberwithin{equation}{section}
\theoremstyle{remark}
\DeclareMathOperator{\supp}{supp}
\DeclareMathOperator{\dist}{dist}
\DeclareMathOperator{\vol}{vol}
\begin{document}
\date{}
\title
[Lattice Points]{A note on lattice points in certain finite type
domains in $\mathbb{R}^d$}
\author[J.W. Guo]{Jingwei Guo}

\address{Jingwei Guo\\School of Mathematical Sciences\\
University of Science and Technology of China\\
Hefei, Anhui Province 230026, People's Republic of China}

\email{jwguo@ustc.edu.cn}

\thanks{}

\begin{abstract}
We study the Fourier transforms of indicator functions of some
special high-dimensional finite type domains and obtain estimates of
the associated lattice point discrepancy.
\end{abstract}

\subjclass[2010]{Primary 11P21, 11H06, 52C07} \keywords{Lattice
points, Fourier transforms, finite type domains.}

\maketitle

\section{Introduction}\label{introduction}

Let $\mathcal{B}\subset \mathbb{R}^d$ be a compact convex domain,
which contains the origin in its interior and has a smooth boundary
$\partial \mathcal{B}$. The number of lattice points $\mathbb{Z}^d$
in the dilated domain $t\mathcal{B}$ is approximately $\vol(B)t^d$
and the lattice point problem is to study the remainder,
$P_{\mathcal{B}}(t)$, in the equation
\begin{equation*}
P_{\mathcal{B}}(t)=\#(t\mathcal{B}\cap\mathbb{Z}^d)-\vol(B)t^d \quad
\textrm{for $t\geq 1$}.
\end{equation*}
For a rich theory of this problem see Kratzel's monographs
\cite{kratzel, krztzelbook2}, Huxley's \cite{huxley}, as well as a
survey article \cite{survey2004} by Ivi\'c, Kr\"atzel, K\"uhleitner,
and Nowak. The study of non-convex domains is also interesting. For
example see Nowak~\cite{nowak-torus} and Chamizo and
Raboso~\cite{chamizo2015} for study of a torus in $\mathbb{R}^3$.

If the boundary $\partial \mathcal{B}$ has points of vanishing
curvature, the problem is hard and our knowledge (especially of high
dimensions) is fragmentary. We assume $d\geq 3$ below and refer
interested readers to \cite{krztzelbook2}, \cite{survey2004}, and
the author~\cite{guo2} for the planar case which is comparatively
well understood.

Randol~\cite{randol} considered super spheres
\begin{equation*}
\mathcal{B}=\{x\in \mathbb{R}^d :
|x_1|^{\omega}+|x_2|^{\omega}+\cdots+|x_d|^{\omega}\leq  1\}
\end{equation*}
for even integer $\omega\geq 3$, and proved that
\begin{equation}\label{randol}
P_{\mathcal{B}}(t)=\bigg\{ \begin{array}{ll}
O(t^{d-2+2/(d+1)}) & \textrm{for $\omega< d+1$},\\
O(t^{(d-1)(1-1/\omega)}) & \textrm{for $\omega\geq d+1$},
\end{array}
\end{equation}
and this estimate is the best possible when $\omega\geq d+1$.
Kr\"atzel~\cite{kratzel_odd} extended this result to odd $\omega\geq
3$ and actually gave an asymptotic formula of the remainder. See
\cite{kratzel} for further results and also
M\"uller~\cite{muller2003} for related (sharp) results when $d$ is
much larger than $\omega$.

Kr\"atzel~\cite{kratzel_2002} and Kr{\"a}tzel and
Nowak~\cite{K-N-2008,K-N-2011} study a specific class of convex
domains in $\mathbb{R}^3$, the so-called bodies of pseudo
revolution. They evaluate the contribution of flat points precisely
and also of other boundary points and obtain asymptotic formulas of
$P_{\mathcal{B}}(t)$.

Many authors tried to study more general domains in $\mathbb{R}^3$.
Partial results are available in Kr\"atzel~\cite{kratzel_2000,
kratzel_2002_MM}, Popov~\cite{popov}, Peter~\cite{peter_2002}, and
Nowak~\cite{nowak_2008}, in which a variety of (somewhat
complicated) curvature assumptions are made. Concerning curvature
assumptions, Kr{\"a}tzel and Nowak write in \cite{K-N-2008,
nowak_2008}:``In general, in dimension 3 it is not at all clear how
`natural' assumptions should look like, concerning the subset of
$\partial \mathcal{B}$ where the curvature vanishes.'' The geometry
is even more complicated in high dimensions.

One concise and possibly proper curvature assumption \emph{for every
dimension} $d\geq 3$ may be the ``\emph{of finite type}'' condition
(in the sense of Bruna, Nagel, and Wainger~\cite{BNW}). A few
results are known for convex domains of finite type. The classical
method (see for example Randol~\cite{randol}) readily yields
\begin{equation}\label{classic}
P_{\mathcal{B}}(t)=O\left(t^{(d-1)\left(1-\frac{1}{(\omega-1)d+1}
\right)}\right)
\end{equation}
($\omega$ is the type of the boundary) as a consequence of the
following bound (due to \cite[Theorem B]{BNW}) of the Fourier
transform of the indicator function of $\mathcal{B}$:
\begin{equation}
|\widehat{\chi}_{\mathcal{B}}(\xi)|\lesssim |\xi|^{-1-(d-1)/\omega}.
\label{FT}
\end{equation}
The bound \eqref{classic} is crude. Iosevich, Sawyer, and
Seeger~\cite[Theorem 1.3]{I-S-S-0} gives a better one, which shows
its dependence on the multitype of $\partial \mathcal{B}$ and
follows from a finer version of \eqref{FT} (\cite[Proposition
1.2]{I-S-S-0}). It is unknown to us whether or not their bound can
be improved in general since in high dimensions the set of boundary
points with vanishing curvature may be complicated and the related
problem of estimating $\widehat{\chi}_{\mathcal{B}}$ is not easy.

 For specific finite type domains, however, Iosevich, Sawyer, and
Seeger's bound may be improved since the two difficulties mentioned
above may be overcome. For example Randol's bound \eqref{randol} for
super spheres is better when $\omega$ is not large (say, of size
$O(d)$). Motivated by examples studied by Randol, Kratzel, and
Nowak, we study a class of high-dimensional domains and prove the
following result.

\begin{theorem}\label{theorem2}
Let $\omega_1, \ldots, \omega_d\in \mathbb{N}$ be even and
\begin{equation*}
\mathcal{D}=\{x\in \mathbb{R}^d :
x_1^{\omega_1}+\cdots+x_d^{\omega_d}\leq 1\}.
\end{equation*}
Then
\begin{equation}
\begin{split}
P_{\mathcal{D}}(t)= &\sum_{S\in \mathcal{P}_1(\mathbb{N}_d)}
                  O, \Omega\left(t^{d-1-\sum\limits_{1\leq l\leq d,\, l\notin S}1/\omega_l}\right)+\\
                  &\sum_{S\in \mathcal{P}_j(\mathbb{N}_d),\, 2\leq j\leq
d}O\left(t^{d-1-\frac{j-1}{d+1}-\frac{2d}{d+1}\sum\limits_{1\leq
l\leq d, \, l\notin S}1/\omega_l}\right).
\end{split}\label{thm-bound1}
\end{equation}
where $\mathbb{N}_d=\{1, 2, \ldots, d\}$ and
$\mathcal{P}_j(\mathbb{N}_d)$ is the collection of all subsets of
$\mathbb{N}_d$ having $j$ elements. If $\omega=\max\{\omega_1,
\ldots, \omega_d\}$ then
\begin{equation}
P_{\mathcal{D}}(t)\lesssim
                  t^{(d-1)(1-1/\omega)}+t^{d-2+2/(d+1)}.
                  \label{thm-bound2}
\end{equation}
\end{theorem}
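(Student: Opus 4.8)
The plan is to estimate $P_{\mathcal{D}}(t)$ by Poisson summation, reducing it to a truncated sum of Fourier transforms, and to control the latter through a directional analysis of $\widehat{\chi}_{\mathcal{D}}$ that reflects the flat directions of $\partial\mathcal{D}$. First I would smooth the indicator: convolving $\chi_{t\mathcal{D}}$ with a nonnegative bump supported at scale $\delta$ produces minorants and majorants, and Poisson summation applied to the smoothed counting function gives
\[
P_{\mathcal{D}}(t)=\sum_{k\neq 0}\widehat{\chi}_{t\mathcal{D}}(k)\,\widehat{\psi}(\delta k)+O(\delta t^{d-1}),
\]
where $\widehat{\chi}_{t\mathcal{D}}(k)=t^{d}\widehat{\chi}_{\mathcal{D}}(tk)$, the error $\delta t^{d-1}$ comes from the surface area of $t\partial\mathcal{D}$, and the weight $\widehat{\psi}(\delta k)$ effectively restricts the sum to $|k|\lesssim 1/\delta$. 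The parameter $\delta$ will be optimized at the end, separately for each family of directions.

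The heart of the matter is the decay of $\widehat{\chi}_{\mathcal{D}}(\xi)$ as a function of the \emph{direction} $\xi/|\xi|$. Via the divergence theorem one reduces to the Fourier transform of the surface measure on $\partial\mathcal{D}$, whose outward normal at $x$ is parallel to $(\omega_{1}x_{1}^{\omega_{1}-1},\dots,\omega_{d}x_{d}^{\omega_{d}-1})$. I would decompose the unit sphere of directions according to the subset $S\subseteq\mathbb{N}_d$ of coordinates that are bounded away from $0$ at the associated critical point; the coordinates $l\notin S$ are the flat ones, where $\partial\mathcal{D}$ vanishes to order $\omega_l$. On the core of the region indexed by $S$ with $|S|=j$, stationary phase in the $j-1$ nondegenerate tangent directions within the $S$-block contributes $|\xi|^{-(j-1)/2}$, the van der Corput estimate for $\int e^{i c|\xi|x_l^{\omega_l}+i\xi_l x_l}\,dx_l$ in each flat direction contributes $|\xi|^{-1/\omega_l}$ once the off-axis linear term is in the degenerate range, and the divergence theorem supplies one further factor $|\xi|^{-1}$. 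This yields the pointwise bound $|\widehat{\chi}_{\mathcal{D}}(\xi)|\lesssim|\xi|^{-1-(j-1)/2-\sum_{l\notin S}1/\omega_l}$ together with the crossover behaviour governing the transition to neighbouring regions.

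With these estimates the lattice sum is organized by the region $S$ of $k/|k|$. For $S\in\mathcal{P}_1(\mathbb{N}_d)$, say $S=\{s\}$, only the coordinate axis carries the slow decay, the sum $t^d\sum_{m\neq 0}|\widehat{\chi}_{\mathcal{D}}(tm\,e_s)|\sim t^d\sum_m (tm)^{-1-\sum_{l\neq s}1/\omega_l}$ converges, and yields the $O(t^{d-1-\sum_{l\notin S}1/\omega_l})$ term; the matching $\Omega$-bound comes from the genuine leading-order asymptotic of the degenerate oscillatory integral along $e_s$, whose main term has a nonzero constant and survives summation, so that for a suitable sequence of $t$ it dominates the contributions of all other regions. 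For $2\le j\le d$ I would instead run the Hlawka-type optimization: insert the directional decay into the truncated sum, bound the number of lattice points at each dyadic scale by the full $d$-dimensional count, sum the dyadic contributions, and balance the result against the smoothing error $\delta t^{d-1}$; optimizing $\delta$ produces the exponent $d-1-\frac{j-1}{d+1}-\frac{2d}{d+1}\sum_{l\notin S}1/\omega_l$. Summing over all $S$ gives \eqref{thm-bound1}, and \eqref{thm-bound2} follows by comparison of exponents, using $1/\omega_l\ge 1/\omega$ in the $j=1$ and flat contributions and $\sum_{l\notin S}1/\omega_l\ge 0$ elsewhere, so that the $\mathcal{P}_1$ terms are dominated by $t^{(d-1)(1-1/\omega)}$ and the remaining terms by $t^{d-2+2/(d+1)}$.

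The main obstacle is the directional Fourier analysis in the transition zones between regions and the associated bookkeeping in the summation. The degenerate oscillatory integrals carry a perturbing linear term when the direction is slightly off a flat subspace, and one needs estimates that are uniform across the crossover scale $|\xi_l|\sim|\xi|^{1/\omega_l}$; ensuring that the contribution of these transition zones, and of lattice points lying between two regions, does not exceed the claimed bounds is the delicate point on which the sharp exponents in \eqref{thm-bound1}---and hence the clean form \eqref{thm-bound2}---depend.
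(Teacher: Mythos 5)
Your overall architecture is the same as the paper's: mollify the indicator, apply Poisson summation, organize the frequency sum according to which coordinates of $k$ vanish, feed in a direction-dependent decay estimate for $\widehat{\chi}_{\mathcal{D}}$, choose the smoothing scale (the paper takes a single global $\varepsilon=t^{-(d-1)/(d+1)}$), and obtain the $\Omega$-result from the asymptotics of the Fourier transform along a coordinate axis (the paper cites Schulz for exactly this). The exponents you write down also agree with \eqref{thm-bound1}. However, there is a genuine gap at the step you yourself label ``the main obstacle'': the entire argument for the $j\geq 2$ terms hinges on a Fourier transform bound that is uniform in the direction, not just valid on the ``core'' of each region $S$. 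A lattice point $k$ supported on the coordinates in $S$ typically has some components $|k_l|$, $l\in S$, much smaller than $|k|$ (e.g.\ $k_l=1$, $|k_d|$ huge), and for such frequencies your core bound $|\xi|^{-1-(j-1)/2-\sum_{l\notin S}1/\omega_l}$ fails whenever $\omega_l>2$: the stationary point $x(\xi)$ then sits near the hyperplane $x_l=0$, its principal curvature in the $l$-th direction is $\asymp(|\xi_l|/|\xi|)^{(\omega_l-2)/(\omega_l-1)}$, and the decay in that tangent direction degrades from $|\xi|^{-1/2}$ toward $|\xi|^{-1/\omega_l}$. These near-subspace lattice points are abundant, so without a quantitative crossover estimate the sums over $S$ with $|S|\geq 2$ cannot be closed; deferring this to a closing paragraph of ``delicate points'' leaves the proof incomplete. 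This is precisely the content of the paper's Lemma \ref{lemma1} and Theorem \ref{theorem3}, which give the uniform bound
\begin{equation*}
|\widehat{\chi}_{\mathcal{D}}(\lambda \xi)|\lesssim
\lambda^{-1}\prod_{l=1}^{d-1}
\min\left(\lambda^{-\frac{1}{\omega_l}},
|\xi_l|^{-\frac{\omega_l-2}{2(\omega_l-1)}}\lambda^{-\frac{1}{2}}
\right),
\end{equation*}
and it is this $\min$-structure, summed against the lattice by comparison with integrals in polar coordinates, that actually produces the exponents $d-1-\frac{j-1}{d+1}-\frac{2d}{d+1}\sum_{l\notin S}1/\omega_l$.

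It is also worth noting how the paper gets past the obstacle, because the route you propose (stationary phase in the nondegenerate directions plus van der Corput in the flat ones) is essentially Randol's original computation, which is exactly the lengthy analysis the paper is designed to avoid. The paper instead bounds the surface measure of the Bruna--Nagel--Wainger cap $\tilde{B}(x(\xi),|\xi|^{-1})$ by elementary Taylor-expansion and convexity inequalities, invoking BNW's Theorem A (the engulfing property of caps) precisely to control points $x(\xi)$ lying close to a coordinate hyperplane, and then BNW's Theorem B converts the cap-measure bound into the Fourier transform bound; that substitution is the paper's main simplification and the piece your outline is missing. Two smaller inaccuracies: the smoothing parameter cannot be ``optimized separately for each family of directions''---there is one global $\varepsilon$, chosen to balance the smoothing error $\varepsilon t^{d-1}$ against the $j=d$ term, and the remaining exponents then come out as stated; and your claim that the axis term dominates all other regions along a sequence of $t$ is false in general (when the $\omega_l$ are small the $j=d$ term $t^{d-2+2/(d+1)}$ is larger), so the $\Omega$-statement must be read, as in the paper, per term of the decomposition rather than as a lower bound on $|P_{\mathcal{D}}(t)|$ itself.
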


{\it Remarks:} In \eqref{thm-bound1}, the first sum is the
contribution of boundary points which lie on coordinate axes; the
terms for $2\leq j \leq d-1$ come from boundary points lying on
coordinate planes but not on axes; the term for $j=d$ is
$O(t^{d-2+2/(d+1)})$, due to boundary points that are not on any
coordinate plane.

Similar examples can be studied in the same way. For interesting
results on rotations of convex domains in $\mathbb{R}^d$ see
\cite{I-S-S-0}, the author~\cite{guo3}, etc.

{\it Notations:} We set $\mathbb{Z}_{*}^{d}=\mathbb{Z}^{d}\setminus
\{0\}$, and $\mathbb{R}^d_*=\mathbb{R}^d\setminus \{0\}$. The
Fourier transform of any function $f\in L^1(\mathbb{R}^d)$ is
$\widehat{f}(\xi)=\int f(x) \exp(-2\pi x\cdot \xi) \, dx$. For
functions $f$ and $g$ with $g$ taking nonnegative real values,
$f\lesssim g$ means $|f|\leq Cg$ for some constant $C$. If $f$ is
nonnegative, $f\gtrsim g$ means $g\lesssim f$. The Landau notation
$f=O(g)$ is equivalent to $f\lesssim g$. The notation $f\asymp g$
means that $f\lesssim g$ and $g\lesssim f$.



\section{Main Estimates}

Let $\mathcal{D}$ be as defined in Theorem \ref{theorem2}. If $x\in
\partial \mathcal{D}$ let $T_x$ be the affine tangent plane to
$\partial \mathcal{D}$ at $x$. Bruna, Nagel, and Wainger~\cite{BNW}
defines a ``ball''
\begin{equation*}
\tilde{B}(x, \delta)=\{y\in \partial \mathcal{D}: \dist(y,
T_x)<\delta\}
\end{equation*}
to be a cap near $x$ cut off from $\partial \mathcal{D}$ by a plane
parallel to $T_x$ at distance $\delta$ from it.

For nonzero $\xi\in \mathbb{R}^d$ let $x(\xi)$ be the unique point
on $\partial \mathcal{D}$ where the unit exterior normal is
$\xi/|\xi|$. We first prove a bound for the surface measure of
$\tilde{B}(x(\xi), |\xi|^{-1})$:

\begin{lemma}\label{lemma1} Let $0<c\leq 1$ be a constant. For
any nonzero $\xi\in \mathbb{R}^d$ with $|\xi_d|/|\xi|\geq c$ we have
\begin{equation*}
\sigma(\tilde{B}(x(\xi), |\xi|^{-1}))\lesssim \prod_{l=1}^{d-1}
\min\left( |\xi|^{-\frac{1}{\omega_l}},
\left(|\xi_l|/|\xi|\right)^{-\frac{\omega_l-2}{2(\omega_l-1)}}|\xi|^{-\frac{1}{2}}
\right),
\end{equation*}
where the implicit constant only depends on $c$ and $\mathcal{D}$,
and the minimum takes the value $|\xi|^{-1/\omega_l}$ if $\xi_l$
vanishes for some $l$.
\end{lemma}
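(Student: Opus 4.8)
The plan is to work with the defining polynomial $F(x)=x_1^{\omega_1}+\cdots+x_d^{\omega_d}$, so that $\partial\mathcal{D}=\{F=1\}$ and the exterior unit normal at $x$ is $\nabla F(x)/|\nabla F(x)|$, where $\nabla F(x)=(\omega_1 x_1^{\omega_1-1},\ldots,\omega_d x_d^{\omega_d-1})$. The defining relation $\xi\parallel\nabla F(x(\xi))$ then reads $\omega_l x_l^{\omega_l-1}=(|\nabla F|/|\xi|)\,\xi_l$ for each $l$. First I would check that the hypothesis $|\xi_d|/|\xi|\ge c$ together with $|x_d|\le 1$ and the boundary constraint $\sum_l x_l^{\omega_l}=1$ forces $|\nabla F(x(\xi))|\asymp 1$, with constants depending only on $c$ and $\mathcal{D}$; consequently $|x_l|\asymp(|\xi_l|/|\xi|)^{1/(\omega_l-1)}$ for every $l$. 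The same lower bound on the $d$-th component of the normal lets me write $\partial\mathcal{D}$ locally as a graph $x_d=g(x_1,\ldots,x_{d-1})$ with $\sqrt{1+|\nabla g|^2}\asymp 1$, reducing the surface measure of the cap to the $(d-1)$-dimensional Lebesgue measure of its projection onto the $(x_1,\ldots,x_{d-1})$-hyperplane.

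The crucial simplification is that $F$ is separable. Writing $y=x+h$ with $x=x(\xi)$, a point $y\in\partial\mathcal{D}$ satisfies $F(x+h)-F(x)=0$, and since the normal direction is $\xi/|\xi|$,
\[
\dist(y,T_x)=\frac{|\nabla F(x)\cdot h|}{|\nabla F(x)|}=\frac{1}{|\nabla F(x)|}\sum_{l=1}^{d}\phi_l(h_l),\qquad \phi_l(s):=(x_l+s)^{\omega_l}-x_l^{\omega_l}-\omega_l x_l^{\omega_l-1}s.
\]
Because each $\omega_l$ is even, $t\mapsto t^{\omega_l}$ is convex and each $\phi_l\ge 0$. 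Hence the cap condition $\dist(y,T_x)<|\xi|^{-1}$ forces $\phi_l(h_l)$ to be at most a fixed multiple of $|\xi|^{-1}$ separately for every $l$, so the projection of $\tilde B(x(\xi),|\xi|^{-1})$ is contained in a box $\prod_{l=1}^{d-1}[-\tau_l,\tau_l]$, where $\tau_l$ is the largest value of $|s|$ for which $\phi_l(s)\le A|\xi|^{-1}$, with $A$ depending on $c$ and $\mathcal{D}$. Combined with the first paragraph this yields $\sigma(\tilde B(x(\xi),|\xi|^{-1}))\lesssim\prod_{l=1}^{d-1}\tau_l$.

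It remains to bound each $\tau_l$, and for this I would use the Taylor remainder
\[
\phi_l(s)=\omega_l(\omega_l-1)\int_0^{s}(s-u)(x_l+u)^{\omega_l-2}\,du,
\]
whose integrand is nonnegative since $\omega_l-2$ is even. A direct estimate of this integral in the two regimes $|s|\lesssim|x_l|$ and $|s|\gtrsim|x_l|$ gives the lower bound $\phi_l(s)\gtrsim\max(|x_l|^{\omega_l-2}s^2,\,|s|^{\omega_l})$. Feeding this into the definition of $\tau_l$ produces $\tau_l\lesssim\min(|\xi|^{-1/\omega_l},\,|x_l|^{-(\omega_l-2)/2}|\xi|^{-1/2})$, and substituting $|x_l|\asymp(|\xi_l|/|\xi|)^{1/(\omega_l-1)}$ turns the second term into $(|\xi_l|/|\xi|)^{-(\omega_l-2)/(2(\omega_l-1))}|\xi|^{-1/2}$, exactly the claimed expression; when $\xi_l=0$ one has $x_l=0$, $\phi_l(s)=s^{\omega_l}$, and $\tau_l\asymp|\xi|^{-1/\omega_l}$, matching the stated value.

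The main obstacle is this last step: proving the uniform lower bound $\phi_l(s)\gtrsim\max(|x_l|^{\omega_l-2}s^2,|s|^{\omega_l})$ cleanly, since the intermediate terms $\binom{\omega_l}{k}x_l^{\omega_l-k}s^k$ of odd order $k$ can be negative and must be controlled through the integral representation rather than term by term. A secondary technical point is the geometric reduction of the first paragraph—verifying $|\nabla F|\asymp 1$ and that the cap is genuinely a graph over the coordinate hyperplane with bounded Jacobian—so that its surface measure is comparable to the planar measure of the box.
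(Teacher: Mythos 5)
Your proposal is correct, and its key mechanism is genuinely different from the paper's. Both arguments reduce the surface measure of the cap to the measure of its projection onto a coordinate hyperplane, and both use the normal relation to get $|x_l|\asymp(|\xi_l|/|\xi|)^{1/(\omega_l-1)}$; the difference is in how the one-variable inequality is obtained. The paper argues by cases: if $a_l=0$ it drops the other (nonnegative) terms; if $a_l\neq 0$ and $|y_l|\le c_1 a_l$ it keeps the quadratic Taylor term; and in the remaining regime (where $y_l$ has the opposite sign and is large relative to $a_l$, so the quadratic approximation fails) it invokes a compactness argument together with the cap-doubling property of \cite[Theorem A]{BNW} to engulf the cap in a comparable cap centered on the hyperplane $\{x_l=0\}$ and fall back to the first case. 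You replace that whole case analysis by the single uniform bound $\phi_l(s)\gtrsim\max\left(|x_l|^{\omega_l-2}s^2,\,|s|^{\omega_l}\right)$, and the bound you flag as the main obstacle is in fact true and elementary: taking $x_l=a\ge 0$ by evenness, for $s\ge 0$ every binomial term is nonnegative, so $\phi_l(s)\ge\binom{\omega_l}{2}a^{\omega_l-2}s^2+s^{\omega_l}$; for $s=-u<0$ with $u\le a/2$ your integral representation gives $\phi_l(-u)\ge\tfrac12\omega_l(\omega_l-1)(a/2)^{\omega_l-2}u^2\gtrsim\max(a^{\omega_l-2}u^2,u^{\omega_l})$; and for $u\ge a/2$ homogeneity gives $\phi_l(-u)=u^{\omega_l}g(a/u)$ with $g(b)=(b-1)^{\omega_l}-b^{\omega_l}+\omega_l b^{\omega_l-1}$ continuous and strictly positive on $[0,2]$ (strict convexity of $t^{\omega_l}$), whence $\phi_l(-u)\gtrsim u^{\omega_l}\gtrsim a^{\omega_l-2}u^2$. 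Your secondary point (surface measure versus projection) is also fillable inside your own scheme, and is glossed over by the paper as well: for $|\xi|\lesssim 1$ the lemma is trivial since the right-hand side is $\gtrsim 1$, while for $|\xi|$ large your box containment applied to the $d$-th coordinate too gives $|y_d-x_d(\xi)|\lesssim|\xi|^{-1/\omega_d}$ on the cap, and $|x_d(\xi)|\gtrsim c^{1/(\omega_d-1)}$ by the normal relation, so over the box the boundary is a single graph $x_d=g(x')$ with $|\nabla g|\lesssim 1$. The trade-off: the paper's route rests on cap doubling, a fact valid for arbitrary convex finite type bodies and hence conceptually more portable, whereas yours is self-contained and avoids \cite{BNW} Theorem A entirely but leans crucially on the separable form of the defining polynomial, which is what makes each $\phi_l(h_l)$ individually nonnegative.
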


\begin{proof}
For any fixed $\xi$ denote $x(\xi)=(a_1, \ldots, a_d)\in \partial
\mathcal{D}$. By definition $\tilde{B}(x(\xi), |\xi|^{-1})$ is the
cap near $x(\xi)$ cut off from $\partial \mathcal{D}$
\begin{equation}
x_1^{\omega_1}+\cdots+x_d^{\omega_d}=1 \label{domainD}
\end{equation}
by the plane
\begin{equation*}
\langle x-(x(\xi)-|\xi|^{-2}\xi), \ \xi \rangle=0.
\end{equation*}
Changing variables $y_i=x_i-a_i$, combining the two equations above,
and eliminating $y_d$ yield the equation
\begin{equation}
\sum_{l=1}^{d-1}(a_l+y_l)^{\omega_l}+\left(a_d-\xi_d^{-1}-\xi_d^{-1}\sum_{l=1}^{d-1}\xi_l
y_l\right)^{\omega_d}=1. \label{(d-1)domain}
\end{equation}
To estimate $\sigma(\tilde{B}(x(\xi), |\xi|^{-1}))$ it suffices to
estimate the size of the $(d-1)$-dimensional domain bounded by
\eqref{(d-1)domain} (namely, the projection of the cap onto
$\mathbb{R}^{d-1}$). Hence it suffices to show that if $(y_1,\ldots,
y_{d-1})$ satisfies \eqref{(d-1)domain} then for each $1\leq l\leq
d-1$
\begin{equation}
\max|y_l|\lesssim \min\left( |\xi|^{-\frac{1}{\omega_l}},
\left(|\xi_l|/|\xi|\right)^{-\frac{\omega_l-2}{2(\omega_l-1)}}|\xi|^{-\frac{1}{2}}
\right). \label{ineq1}
\end{equation}

Without loss of generality we only prove the case $l=1$ since other
cases are the same.

\emph{Case 1}: $a_1=0$. Then \eqref{(d-1)domain} implies that
\begin{equation}
\max|y_1|\lesssim |\xi|^{-1/\omega_1}. \label{case1}
\end{equation}
Indeed, Taylor's expansion gives
\begin{equation}
(a_l+y_l)^{\omega_l}\geq a_l^{\omega_l}+\omega_l a_l^{\omega_l-1}y_l
\quad \textrm{if $a_l\neq 0$},\label{ineq2}
\end{equation}
and
\begin{equation}
\left(a_d-\xi_d^{-1}-\xi_d^{-1}\sum_{l=1}^{d-1}\xi_l
y_l\right)^{\omega_d} \geq a_d^{\omega_d}-\omega_d a_d^{\omega_d-1}
\xi_d^{-1}\sum_{l=1}^{d-1}\xi_l y_l+O(|\xi|^{-1}),\label{ineq3}
\end{equation}
since $\omega_l$ is even and $|\xi_d|\asymp |\xi|$. Recall that
$x(\xi)$ satisfies \eqref{domainD} and note that $\xi_1=0$ since
\begin{equation}
(\omega_1 a_1^{\omega_1-1}, \ldots, \omega_d a_d^{\omega_d-1})
\parallel \xi.\label{normal}
\end{equation}
Applying these facts and the two inequalities \eqref{ineq2} and
\eqref{ineq3} to \eqref{(d-1)domain} yields \eqref{case1}, hence
\eqref{ineq1}.

\emph{Case 2}: $a_1>0$ (the negative case is similar). We assert
that: if $\max|y_1|\leq c_1 a_1$ for a sufficiently small constant
$c_1$ then \eqref{ineq1} holds with $l=1$; otherwise it still holds.

If $\max|y_1|\leq c_1 a_1$, applying \eqref{ineq2}, \eqref{ineq3},
and
\begin{equation*}
(a_1+y_1)^{\omega_1}=a_1^{\omega_1}+\omega_1 a_1^{\omega_1-1}y_1+
a_1^{\omega_1-2}y_1^2(\omega_1(\omega_1-1)/2+O(c_1))+y_1^{\omega_1}
\end{equation*}
to \eqref{(d-1)domain} (like what we did in Case 1) yields
\begin{equation}
|y_1|\lesssim \min\left( |\xi|^{-\frac{1}{\omega_1}},
|a_1|^{-\frac{\omega_1-2}{2}}|\xi|^{-\frac{1}{2}}
\right)\label{ineq4}
\end{equation}
if $c_1$ is sufficiently small. Note that \eqref{normal} implies
$|a_1|\asymp (|\xi_1|/|\xi|)^{1/(\omega_1-1)}$. Hence the first
assertion follows immediately.

If $y_1>0$ or $y_1<0$ but $\max_{y_1<0}|y_1|\leq c_1 a_1$, a similar
argument as above proves the desired bound.

Hence, to prove the second assertion, we may assume $y_1<0$ and
$\max_{y_1<0}|y_1|>c_1 a_1$. By a compactness argument there exists
a constant $C_1$ (depending only on $c_1$ and $\mathcal{D}$) such
that $\tilde{B}(x(\xi), C_1|\xi|^{-1})$ intersects the coordinate
plane $x_1=0$. Let $P$ be a point of the intersection. Then the cap
$\tilde{B}(P, C_1|\xi|^{-1})$ intersects $\tilde{B}(x(\xi),
C_1|\xi|^{-1})$. By \cite[Theorem A]{BNW} there exists a constant
$C_2=C_2(\mathcal{D})$ such that
\begin{equation*}
\tilde{B}(x(\xi), C_1|\xi|^{-1})\subset \tilde{B}(P, C_2
C_1|\xi|^{-1}).
\end{equation*}
By this inclusion and the result of Case 1 (applying  to
$\tilde{B}(P, C_2 C_1|\xi|^{-1})$), we get
\begin{equation}
\max |y_1|\lesssim |\xi|^{-1/\omega_1}.\label{ineq5}
\end{equation}
Hence $a_1\lesssim |\xi|^{-1/\omega_1}$, which implies
\begin{equation}
|\xi|^{-\frac{1}{\omega_1}}\lesssim
\left(|\xi_1|/|\xi|\right)^{-\frac{\omega_1-2}{2(\omega_1-1)}}|\xi|^{-\frac{1}{2}}.\label{ineq6}
\end{equation}
The inequalities \eqref{ineq5} and \eqref{ineq6} gives \eqref{ineq1}
with $l=1$. This finishes the proof.
\end{proof}

By the Gauss-Green formula, \cite[Theorem B]{BNW}, and Lemma
\ref{lemma1}, we immediately get the following bound of the Fourier
transform of the indicator function of $\mathcal{D}$:
\begin{theorem}\label{theorem3}
Let $0<c\leq 1$ be a constant.  For any $\xi\in S^{d-1}$ with
$|\xi_d|\geq c$ and $\lambda>0$ we have
\begin{equation}
|\widehat{\chi}_{\mathcal{D}}(\lambda \xi)|\lesssim
\lambda^{-1}\prod_{l=1}^{d-1}
\min\left(\lambda^{-\frac{1}{\omega_l}},
|\xi_l|^{-\frac{\omega_l-2}{2(\omega_l-1)}}\lambda^{-\frac{1}{2}}
\right),
\end{equation}
where the implicit constant only depends on $c$ and $\mathcal{D}$,
and the minimum takes the value $\lambda^{-1/\omega_l}$ if $\xi_l$
vanishes for some $l$.
\end{theorem}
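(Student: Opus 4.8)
The plan is to combine the Gauss--Green formula with the cap-measure estimate from Lemma~\ref{lemma1}. First I would recall the standard reduction: integration by parts (the divergence theorem applied to $\chi_{\mathcal{D}}$) expresses $\widehat{\chi}_{\mathcal{D}}(\lambda\xi)$ as a surface integral over $\partial\mathcal{D}$ against the oscillating kernel $e^{-2\pi i \lambda \langle x,\xi\rangle}$, weighted by the geometry of the normal. Concretely, for $\xi\in S^{d-1}$ one has a representation of the form
\begin{equation*}
\widehat{\chi}_{\mathcal{D}}(\lambda\xi)=\frac{-1}{2\pi i\lambda}\int_{\partial\mathcal{D}} \langle N(x),\xi\rangle \, e^{-2\pi i\lambda\langle x,\xi\rangle}\, d\sigma(x)+\text{(lower order)},
\end{equation*}
where $N(x)$ is the outward unit normal. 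This already produces the leading factor $\lambda^{-1}$ in the claimed bound, so the remaining task is to show the surface integral is controlled by $\prod_{l=1}^{d-1}\min(\lambda^{-1/\omega_l}, |\xi_l|^{-(\omega_l-2)/(2(\omega_l-1))}\lambda^{-1/2})$.

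Next I would invoke the oscillatory-integral machinery of Bruna, Nagel, and Wainger directly, rather than reproving it. Their \cite[Theorem B]{BNW} bounds precisely this kind of surface Fourier integral in terms of the surface measure of the Bruna--Nagel--Wainger cap $\tilde{B}(x(\xi),\lambda^{-1})$: the oscillation at frequency $\lambda$ localizes the effective region of integration to the cap where $\langle x,\xi\rangle$ varies by at most $O(\lambda^{-1})$, and the stationary-phase/damping estimate gives
\begin{equation*}
\left|\int_{\partial\mathcal{D}} \psi(x)\, e^{-2\pi i\lambda\langle x,\xi\rangle}\, d\sigma(x)\right|\lesssim \sigma\bigl(\tilde{B}(x(\xi),\lambda^{-1})\bigr),
\end{equation*}
uniformly for smooth cutoffs $\psi$. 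Since the phase $\langle x,\xi\rangle$ has its unique critical point on $\partial\mathcal{D}$ at $x(\xi)$ (the point where the normal is parallel to $\xi$), a partition of unity lets me concentrate on a neighborhood of $x(\xi)$ and discard the non-stationary part, which contributes only rapidly decaying error.

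The final step is purely substitution: apply Lemma~\ref{lemma1} with $|\xi|=\lambda$ and the normalization $\xi\in S^{d-1}$, so that $|\xi_l|/|\xi|=|\xi_l|$ and $|\xi|^{-1/2}=\lambda^{-1/2}$, converting the cap-measure bound
\begin{equation*}
\sigma\bigl(\tilde{B}(x(\lambda\xi),\lambda^{-1})\bigr)\lesssim \prod_{l=1}^{d-1}\min\left(\lambda^{-1/\omega_l}, |\xi_l|^{-\frac{\omega_l-2}{2(\omega_l-1)}}\lambda^{-1/2}\right)
\end{equation*}
into exactly the product appearing in the statement; the degenerate convention when some $\xi_l=0$ transfers verbatim. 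The hypothesis $|\xi_d|\ge c$ guarantees the normal-direction condition $|\xi_d|/|\xi|\ge c$ needed to apply the lemma, and it ensures $\langle N(x),\xi\rangle$ stays bounded away from a degenerate configuration near the critical point. The only genuine obstacle is verifying that the implicit constants remain uniform as $\xi$ ranges over the region $|\xi_d|\ge c$---in particular that the stationary-phase estimate and the cutoffs can be chosen depending only on $c$ and $\mathcal{D}$---but this is exactly the uniformity already built into \cite[Theorem B]{BNW} and into Lemma~\ref{lemma1}, so the assembly is immediate.
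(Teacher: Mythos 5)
Your proposal follows essentially the same route as the paper, whose entire proof is the one-line assembly you spell out: Gauss--Green to pull out the factor $\lambda^{-1}$, \cite[Theorem B]{BNW} to dominate the resulting surface integral by the cap measure $\sigma(\tilde{B}(x(\xi),\lambda^{-1}))$, and Lemma~\ref{lemma1} applied to the vector $\lambda\xi$ to evaluate that measure. The only slip---the phase $\langle x,\xi\rangle$ has \emph{two} critical points $x(\pm\xi)$ on the closed surface $\partial\mathcal{D}$, and BNW's bound involves both caps---is immaterial, since $\mathcal{D}$ is symmetric under $x\mapsto -x$ and Lemma~\ref{lemma1} applies equally to $\pm\xi$, so both caps satisfy the same estimate.
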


{\it Remark:} This Lemma is a generalization of \cite[II, Theorem
2]{randol}. Our proof is simpler due to an application of the
results from \cite{BNW}. The result shows that the size of
$|\widehat{\chi}_{\mathcal{D}}(\lambda \xi)|$ may depend on both the
size of $\lambda$ and the direction of $\xi$.


\section{Proof of Theorem \ref{theorem2}}
Let $0\leq \rho\in C_0^{\infty}(\mathbb{R}^d)$ be such that
$\int_{\mathbb{R}^d}\rho(y) \, dy=1$, $\varepsilon>0$,
$\rho_{\varepsilon}(y)= \varepsilon^{-d}\rho(\varepsilon^{-1}y)$,
and
\begin{equation*}
N_{\varepsilon}(t)=\sum_{k\in \mathbb{Z}^d}
\chi_{t\mathcal{D}}*\rho_{\varepsilon}(k),
\end{equation*}
where $\chi_{t\mathcal{D}}$ denotes the characteristic function of
$t\mathcal{D}$. By the Poisson summation formula we have
\begin{equation}
N_{\varepsilon}(t)=t^d\sum_{k\in \mathbb{Z}^d}
\widehat{\chi}_{\mathcal{D}}(tk)\widehat{\rho}(\varepsilon
k)=\textrm{vol}(\mathcal{D}) t^d+R_{\varepsilon}(t),\label{asymp}
\end{equation}
where
\begin{equation*}
R_{\varepsilon}(t)=t^d\sum_{k\in \mathbb{Z}_{*}^{d}}
\widehat{\chi}_{\mathcal{D}}(tk)\widehat{\rho}(\varepsilon k).
\end{equation*}

To estimate $R_{\varepsilon}(t)$, by using a partition of unity, we
decompose it as the sum of $S_j$, $1\leq j\leq d$, where
\begin{equation*}
S_j=t^d \sum_{k\in \mathbb{Z}_{*}^{d}} \Omega_j(k)
\widehat{\chi}_{\mathcal{D}}(tk)\widehat{\rho}(\varepsilon k)
\end{equation*}
with each $\Omega_j$ supported in $\Gamma_j=\{x\in \mathbb{R}^d :
|x_j|\geq (2d)^{-1/2}|x|\}$ and smooth away from the origin. We then
split $S_j$ (as follows) depending on the number of nonzero
components of $k$:
\begin{equation*}
S_j=\sum_{i=1}^{d}S_{i, j}
\end{equation*}
with
\begin{equation*}
S_{i, j}=t^d \sum_{(i)} \Omega_j(k)
\widehat{\chi}_{\mathcal{D}}(tk)\widehat{\rho}(\varepsilon k),
\end{equation*}
where the summation is over all $k\in \mathbb{Z}_{*}^{d}\cap
\supp(\Omega_j)$ having exactly $i$ nonzero components.

Without loss of generality we may assume $j=d$, namely, restrict the
$k\in \mathbb{Z}_*^d$ to a cone about $x_d$-axis. By Theorem
\ref{theorem3} we have
\begin{equation}
|S_{1, d}|\lesssim t^{d-1-\sum_{l=1}^{d-1}1/\omega_l} \sum_{k_d \in
\mathbb{Z}_{*}^{1}} |k_d|^{-1-\sum_{l=1}^{d-1}1/\omega_l}\lesssim
t^{d-1-\sum_{l=1}^{d-1}1/\omega_l}.\label{bound1}
\end{equation}
For $2\leq i\leq d$ we apply Theorem \ref{theorem3}, compare the
sums with integrals in polar coordinates, and get
\begin{equation*}
|S_{i, d}|\lesssim \sum_{S\in \mathcal{P}_i(\mathbb{N}_d) : d\in S}
t^{d-\frac{i+1}{2}-\sum\limits_{1\leq l\leq d, \, l\notin
S}1/\omega_l}\left(1+\varepsilon^{-\frac{i-1}{2}+\sum\limits_{1\leq
l\leq d, \, l\notin S}1/\omega_l}\right).
\end{equation*}
Note that the first term of the summand above is not larger than the
right side of \eqref{bound1}.

By using the two bounds above and similar results for other $j$'s we
obtain a bound of $R_{\varepsilon}(t)$. Note that
\begin{equation}
N_{\varepsilon}(t-C\varepsilon)\leq
\#(t\mathcal{D}\cap\mathbb{Z}^d)=\sum_{k\in \mathbb{Z}^d}
\chi_{t\mathcal{D}}(k) \leq
N_{\varepsilon}(t+C\varepsilon),\label{3sides}
\end{equation}
where $C$ is a positive constant depending on $\mathcal{D}$. Thus by
using \eqref{asymp} and $\varepsilon=t^{-(d-1)/(d+1)}$ we get the
desired upper bound in \eqref{thm-bound1}, from which it is simple
to derive \eqref{thm-bound2}.


To prove the lower bound in \eqref{thm-bound1} (see also
\cite[P.167-168]{I-S-S-0}), we first apply the asymptotic expansion
in Schulz~\cite{schulz} to get
\begin{equation*}
\widehat{n_d d\sigma}(tk)=C_1 \sin(2\pi t
k_d-\pi\nu/2)(tk_d)^{-\nu}+O((tk_d)^{-\nu-1/\eta}),
\end{equation*}
where  $n_d$ is the $d^{\textrm{th}}$ component of the Gauss map of
$\partial \mathcal{D}$, $d\sigma$ is the induced Lebesgue measure on
$\partial \mathcal{D}$, $k=(0, \ldots, 0, k_d)$, $k_d\in
\mathbb{N}$, $\nu=\sum_{l=1}^{d-1}1/\omega_l$, $C_1$ is a constant
(depending on $\omega_1,\ldots, \omega_{d-1}$), and $\eta$ is the
least common multiple of  $\omega_1,\ldots, \omega_{d-1}$.

Hence by the Gauss--Green formula we can readily get an expansion of
$\widehat{\chi}_{\mathcal{D}}(tk)$. We then split the sum $S_{1, d}$
as follows
\begin{equation*}
S_{1, d}=t^d \sum_{\substack{k=(0, \ldots, 0, k_d)\\k_d\in
\mathbb{Z}_{*}^1}} \widehat{\chi}_{\mathcal{D}}(tk)+t^d
\sum_{\substack{k=(0, \ldots, 0, k_d)\\k_d\in \mathbb{Z}_{*}^1}}
\widehat{\chi}_{\mathcal{D}}(tk)(\widehat{\rho}(\varepsilon
k)-1)=:I+II
\end{equation*}
and apply the expansion. Therefore
\begin{equation*}
I=t^{d-1-\nu}g(t)+O(t^{d-1-\nu-1/\eta}),
\end{equation*}
where
\begin{equation*}
g(t)=C_2 \sum_{k_d\in \mathbb{Z}_{*}^1} |k_d|^{-\nu-1}\sin(2\pi t
|k_d|-\pi\nu/2)
\end{equation*}
(\footnote{This function $g$ matches results obtained by Kr{\"a}tzel
and Nowak, for example, \cite[P. 147]{K-N-2008}.}) with a positive
constant $C_2$ (depending on $\omega_1,\ldots, \omega_{d-1}$), and
\begin{equation*}
II=O(t^{d-1-\nu}(\varepsilon^{\nu}+\varepsilon)).
\end{equation*}

Note that the function $g$ is periodic and not identically zero,
hence $\limsup_{t\rightarrow \infty} |g(t)|>0$. Combining the two
bounds for I and II, the symmetry, \eqref{asymp}, and
\eqref{3sides}, we get the desired lower bound in
\eqref{thm-bound1}. This finishes the proof.

\subsection*{Acknowledgments}

This work was partially supported by the NSFC Grant No. 11501535 and
the Research Foundation of the University of Science and Technology
of China No. KY0010000027.

\end{document}